\theoremstyle{plain} \newtheorem{theorem}{Theorem}[section] 
\newtheorem{lemma}{Lemma}[section]   \theoremstyle{definition} 
  \theoremstyle{remark} 
\numberwithin{equation}{section}     
   \theoremstyle{theorem}
\begin{document}

\title[An elementary app. to the gen. Ramanujan-Nagell eq.] {An elementary approach to the generalized Ramanujan-Nagell equation}

\author[E. K. Mutlu, M. Le and G. Soydan]{El\.If K{\i}z{\i}ldere Mutlu, Maohua Le and G\"{o}khan Soydan}

\address{{\bf Elif K{\i}z{\i}ldere Mutlu}\\ Department of Mathematics \\ Bursa Uluda\u{g} University\\
 16059 Bursa, T\"URK\.IYE}
\email{elfkzldre@gmail.com}

\urladdr{http://orcid.org/0000-0002-7651-7001}

\address{{\bf Maohua Le}\\
	Institute of Mathematics, Lingnan Normal College\\
	Zhangjiang, Guangdong, 524048 China}

\email{lemaohua2008@163.com}
\urladdr{http://orcid.org/0000-0002-7502-2496}

\address{{\bf G\"{o}khan Soydan} \\ 	Department of Mathematics \\ 	Bursa Uluda\u{g} University\\ 	16059 Bursa, T\"URK\.IYE} \email{gsoydan@uludag.edu.tr }
\urladdr{http://orcid.org/0000-0002-6321-4132}

\newcommand{\acr}{\newline\indent}

\thanks{}

\subjclass[2010]{11D61;11D41} \keywords{polynomial-exponential Diophantine equation; generalized Ramanujan-Nagell equation; elementary method in number theory}

\begin{abstract}
Let $k$ be a fixed positive integer with $k>1$. In this paper, using various elementary methods in number theory, we give criteria under which the equation $x^2+(2k-1)^y=k^z$ has no positive integer solutions $(x,y,z)$ with $y\in\{3,5\}$.
\end{abstract}

\maketitle

\section{Introduction}\label{sec:1}
Let $\mathbb{Z}$, $\mathbb{N}$ be the sets of all integers and positive integers respectively. Let $d,k$ be fixed positive integers such that $\min\{d,k\}>1$ and $\gcd(d,k)=1$. The polynomial-exponential Diophantine equations of the form
\begin{equation}\label{eq.1.1}
x^2+d^y=k^z,\,\,x,y,z\in\mathbb{N}
\end{equation}
is usually called the generalized Ramanujan-Nagell equation. The solution of \eqref{eq.1.1} is an interesting problem with long history and rich contents (see \cite{LS}). In 2014, N. Terai \cite{T1} discussed the solution of \eqref{eq.1.1} in the case $d=2k-1$, and conjectured that, for any $k$ with $k>1$, the equation
\begin{equation}\label{eq.1.2}
x^2+(2k-1)^y=k^z,\,\,x,y,z\in\mathbb{N}
\end{equation}
has only one solution $(x,y,z)=(k-1,1,2)$.
The above conjecture has been verified in many special cases (see \cite[Theorem 3.1]{BB},\cite[Corollary 1.4]{DGX}, \cite[Corollary 1.1]{FL}, \cite[Theorem 1.2]{FT}, \cite{LS} and \cite[Proposition 3.3]{T1}). However, the case $4\mid k$ of this conjecture is a rather difficult problem. In this respect, N. Terai \cite{T1} used some classical number theory methods to discuss \eqref{eq.1.2} for $k\le 30$. However, his results are not available for $k\in\{12,24\}$. In 2017, M.A. Bennett and N. Billerey \cite{BB} used the modular approach to solve the case $k\in\{12,24\}$. Very recently, we follow similar method to solve \eqref{eq.1.2} for the case that $30<k<724$, $4\mid k$ and $2k-1$ is an odd prime power (see \cite[Theorem 1.1]{MLS}). In Section 3.2 of \cite{MLS}, using the theory of elliptic curves (e.g. the existence of $S$-integral points on a Weierstrass elliptic curve and descent methods on elliptic curves) and some elementary methods in number theory, we solve the Diophantine equations
\begin{equation}\label{eq.x.1}
	x^2+(2k-1)^3=k^z,\,\, z>3\,\,\text{odd},
\end{equation}
and
\begin{equation}\label{eq.x.2}
	x^2+(2k-1)^5=k^z,\,\, z>5\,\,\text{odd},	
\end{equation}
where $4\mid k$, $30<k <724$ and $2k-1$ is an odd prime power.

In this paper, by improving the results in Section 3.2 of \cite{MLS} and using various elementary methods, we prove the following results concerning the solutions $(x,y,z)$ of \eqref{eq.1.2} with $y\in\{3,5\}$.
\begin{theorem}\label{theo.1.1}
If $(2k-1)$ has a divisor $d$ with $d\equiv \pm 3 \pmod{8}$, then \eqref{eq.1.2} has no solutions $(x,y,z)$ with $y\in\{3,5\}$.
\end{theorem}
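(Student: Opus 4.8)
The plan is to use the divisor hypothesis through quadratic residues to force $z$ to be even, then descend to a Fermat-type relation that can be bounded and resolved by elementary factorization. First I would reduce the hypothesis to a prime: since $\{1,7\}=\{\pm1\}$ is an index-$2$ subgroup of $(\mathbb{Z}/8\mathbb{Z})^\times$, a divisor $d\equiv\pm3\pmod 8$ cannot have all its prime factors $\equiv\pm1\pmod8$, so $2k-1$ has a prime factor $p$ with $p\equiv\pm3\pmod8$, and for such $p$ one has $\left(\frac{2}{p}\right)=-1$. Reducing \eqref{eq.1.2} modulo $p$ and using $p\mid(2k-1)$ gives $x^2\equiv k^z\pmod p$; since $\gcd(k,2k-1)=1$ we have $p\nmid k$, so $k^z$ is a nonzero quadratic residue. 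From $2k\equiv1\pmod p$ we get $\left(\frac{k}{p}\right)=\left(\frac{2^{-1}}{p}\right)=\left(\frac{2}{p}\right)=-1$, whence $(-1)^z=\left(\frac{k}{p}\right)^z=\left(\frac{k^z}{p}\right)=1$ and therefore $z$ is even.

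Writing $z=2w$, I would factor $(2k-1)^y=k^{2w}-x^2=(k^w-x)(k^w+x)$. The two factors are positive (as $x<k^w$), odd, and coprime (their gcd divides $2k^w$, is odd, and is prime to $2k-1$), so each is a perfect $y$-th power: $k^w-x=a^y$, $k^w+x=b^y$ with $ab=2k-1$, $\gcd(a,b)=1$, and $1\le a<b$. Adding gives the key relation
\[
a^y+b^y=2k^w,\qquad ab=2k-1 .
\]

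The next step is to bound $w$. From $(2k-1)^y=k^{2w}-x^2<k^{2w}$ and $2k-1\ge k$ we get $k^y\le(2k-1)^y<k^{2w}$, so $w\ge(y+1)/2$. If $a\ge3$ then $b\le(2k-1)/3<2k/3$, hence $k^w<b^y<(2/3)^y k^y<k^y$, forcing $w\le y-1$; together these leave only $(y,w)\in\{(3,2),(5,3),(5,4)\}$. If instead $a=1$, then $b=2k-1$ and $1+(2k-1)^y=2k^w$; expanding $(2k-1)^y\equiv-1+2yk\pmod{k^2}$ and using $w\ge2$ gives $k\mid 2y$, a short finite list of $k$ to be checked directly.

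The remaining work, which I expect to be the main obstacle, is to eliminate the three residual equations. For $(y,w)=(3,2)$, substituting $k=(ab+1)/2$ turns $a^3+b^3=2k^2$ into $2(a^3+b^3)=(ab+1)^2$; factoring $a^3+b^3=(a+b)(a^2-ab+b^2)$ and noting $\gcd(a+b,a^2-ab+b^2)\mid3$, the relation $\tfrac{a+b}{2}\,(a^2-ab+b^2)=k^2$ with coprime factors (the case $3\nmid a+b$) forces $\tfrac{a+b}{2}=e^2$ and $a^2-ab+b^2=c^2$; eliminating $a,b$ leaves $4e^4+9e^2+3$ a perfect square, which is impossible for $e\ge2$ since it lies strictly between $(2e^2+2)^2$ and $(2e^2+3)^2$, while $e\le1$ yields only the excluded $a=b=1$. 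The sub-case $3\mid a+b$ is treated identically after extracting the factor $3$, and the two $y=5$ equations succumb to the same technique (coprime factors of $k^w$ are $y$-th powers, reducing to a polynomial trapped between consecutive powers). The delicate part is the coprimality and divisibility bookkeeping and verifying the squeeze estimates, rather than any deep input.
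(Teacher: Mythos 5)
Your opening moves are sound and, modulo reorganization, coincide with the paper's: your quadratic-residue argument (pass to a prime $p\mid 2k-1$ with $p\equiv\pm3\pmod 8$, deduce $(\tfrac{k}{p})=(\tfrac{2}{p})=-1$, hence $2\mid z$) is exactly the contrapositive of the paper's combination of \eqref{eq.3.1} with Lemma \ref{lem.2.4}, and your factorization $k^w-x=a^y$, $k^w+x=b^y$, $ab=2k-1$, together with the bounds $w\ge(y+1)/2$ and $w\le y-1$ (plus the finite check when $a=1$), reproduces the content of the proof of Lemma \ref{lem.2.4}. All of that checks out, including the squeeze $(2e^2+2)^2<4e^4+9e^2+3<(2e^2+3)^2$ in the subcase $y=3$, $w=2$, $3\nmid a+b$.

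The gap is in the endgame, which you yourself flag as the main obstacle but then dispatch by assertion. First, the subcase $3\mid a+b$ of $(y,w)=(3,2)$ is \emph{not} ``treated identically'': writing $v_3(a+b)=s$ one finds $s$ odd, and already for $s=1$ the elimination yields $(c+3e)^2=12e^4+9e^2+1$, whose leading coefficient $12$ is not a perfect square, so the consecutive-squares squeeze does not apply (for $e=6$ the value is $126^2+1$, so the obstruction is real, not cosmetic). Second, and more seriously, the two $y=5$ cases do not ``succumb to the same technique'': factoring $a^5+b^5=(a+b)(a^4-a^3b+a^2b^2-ab^3+b^4)$ and imposing that the coprime factors of $k^w$ be $w$-th powers leads (for $w=3$, say) to $c^3=s^4-5s^2p+5p^2$ with $s=2e^3$, $p=2ec-1$, which is a genuine surface in $(e,c)$ and nothing like ``a polynomial trapped between consecutive powers.'' So three of your four residual cases are unproved. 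The paper avoids this entirely by substituting $z=2w$ back into \eqref{eq.1.2} to get $x^2=k^{2w}-(2k-1)^y=F(k)$ for an explicit one-variable polynomial $F$, writing $F=G^2+R$ with $\deg R<\deg G$, and squeezing $x^2$ between $(G(k)-1)^2$ and $(G(k)+1)^2$ for $k$ beyond an explicit bound (Lemmas \ref{lem.2.2} and \ref{lem.2.3}); that single device disposes of every residual case uniformly and is the elementary argument your sketch is missing.
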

For any sufficiently large positive integer $N$, let $N_0$ denote the number of positive integers $k$ which satisfy the assumption of Theorem \ref*{theo.1.1} with $1\le k \le N$. Then we have 
\begin{equation*}
\lim_{N \to \infty} \dfrac{N_0}{N}\sim 1-\prod_{p}^{}(1-\dfrac{1}{p})\,\,\,(\text{p is prime with}\,\, p\equiv \pm 3\pmod{8}).
\end{equation*}
Thus for almost all positive integers $k$, the equation \eqref{eq.1.2} has no solutions $(x,y,z)$ with $y\in\{3,5\}$.
\begin{theorem}\label{theo.1.2}
If $k$ is a square, then \eqref{eq.1.2} has no solutions $(x,y,z)$ with $y\in\{3,5\}$.	
\end{theorem}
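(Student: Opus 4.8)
The plan is to exploit the fact that $k=m^2$ makes $k^z=(m^z)^2$ a perfect square, so that \eqref{eq.1.2} becomes a difference of two squares. Writing $k=m^2$ with $m>1$ and recalling $y$ is odd, I would first rewrite the equation as $(m^z-x)(m^z+x)=(2m^2-1)^y$. Here $x\ge 1$ (otherwise $(2m^2-1)^y=(m^z)^2$ would force $2m^2-1=1$), and since $2m^2-1$ is odd and coprime to $m$, the two factors are odd and coprime: their gcd divides the sum $2m^z$, is odd, hence divides $m^z$, and also divides $(2m^2-1)^y$, so it is $1$. Being coprime divisors of a perfect $y$-th power, each factor is itself a $y$-th power, whence $m^z-x=A^y$ and $m^z+x=B^y$ for coprime odd integers $1\le A<B$ with $AB=2m^2-1$. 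Adding yields the key reduction
\begin{equation*}
A^y+B^y=2m^z,\qquad AB=2m^2-1,\qquad \gcd(A,B)=1,
\end{equation*}
and everything afterward is an analysis of this system for $y\in\{3,5\}$.

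Next I would pin down the admissible data. Since $2m^2-1\equiv 1$ or $2\pmod 3$ and $2m^2-1\not\equiv 0\pmod 5$ (because $2$ is a quadratic non-residue modulo $3$ and $3$ is one modulo $5$), neither $3$ nor $5$ divides $AB$; together with $A$ odd this forces $A\ge 7$ as soon as $A\ne 1$. The value $A=1$ is excluded directly: it gives $(2m^2-1)^y+1=2m^z$, and factoring the left side as $2m^2\cdot N$ with $N\equiv y\pmod{m^2}$ turns this into $m^{z-2}\equiv y\pmod{m^2}$, i.e.\ $m^2\mid y$, which is impossible. For the exponent range, AM--GM gives $2m^z=A^y+B^y\ge 2(AB)^{y/2}>2m^y$, so $z\ge y+1$; while $7\le A\le B$ with $AB=2m^2-1$ gives $A^y+B^y\le 2B^y\le 2\big((2m^2-1)/7\big)^y$, whence $m^{z-2y}<(2/7)^y<1$ and $z\le 2y-1$. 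Thus $z\in\{4,5\}$ for $y=3$ and $z\in\{6,7,8,9\}$ for $y=5$ (the finitely many small $m$, for which $A\ge 7$ is vacuous, are checked directly).

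The core step uses the factorization valid for odd $y$, namely $A^y+B^y=(A+B)\,C_y$, where $C_3=(A+B)^2-3AB$ and $C_5=(A+B)^4-5(A+B)^2AB+5(AB)^2$. One checks $\gcd(A+B,C_y)\mid y$; moreover $A+B=2w$ is even while $C_y$ is odd, so $w\,C_y=m^z$. Away from the exceptional case $y\mid m$ the two factors are coprime, hence each is a perfect $z$-th power: $w=r^z$, $C_y=s^z$ with $rs=m$. Substituting $A+B=2r^z$ and $AB=2r^2s^2-1$ into $C_y=s^z$ produces an explicit Diophantine relation between $r,s$; for $y=3$ it is
\begin{equation*}
4r^{2z}-6r^2s^2+3=s^z,
\end{equation*}
which confines $s$ to the short window $r^2<s<4^{1/z}r^2$. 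I would then rule these out for $r\ge 2$ (the case $r=1$ forces $A+B=2$, i.e.\ $A=B=1$, contradicting $AB=2m^2-1$). For even $z$ this is clean: when $z=4$ the relation is a quadratic in $s^2$ whose discriminant $4r^8+9r^4+3$ lies strictly between $(2r^4+2)^2$ and $(2r^4+3)^2$, so it is not a square. The remaining odd values of $z$ and the quartic-cofactor relations for $y=5$ would be eliminated by combining the window for $s$ with divisibility constraints such as $s^2\mid 4r^{2z}+3$ and congruences modulo $r$, $s$ and small primes.

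The main obstacle is precisely the odd exponents $z$ (for instance $z=5$ when $y=3$, and $z\in\{7,9\}$ when $y=5$): there the auxiliary relation is no longer quadratic, so the elementary ``trapped between consecutive powers'' device does not apply, and one must squeeze a contradiction out of the size window for $s$ together with the divisibility $s^2\mid 4r^{2z}+3$. This is exactly the regime that in \cite{MLS} required $S$-integral points on elliptic curves and descent, so the crux is to replace that machinery by a purely elementary estimate. The one further technicality is the exceptional subcase $y\mid m$, where $w$ and $C_y$ share the prime $y$; there the $z$-th-power extraction must be performed only after splitting off the common $y$-part, after which the same window-and-congruence analysis applies.
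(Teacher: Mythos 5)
Your setup runs parallel to the paper's proof: write $k=\ell^2$ (your $m$), split $(\ell^z-x)(\ell^z+x)=(2\ell^2-1)^y$ into coprime $y$-th powers $A^y,B^y$ with $AB=2\ell^2-1$, and bound $z$ from $2\ell^z=A^y+B^y$; your parity-free bound $y+1\le z\le 2y-1$ (via $A\ge 7$, plus a separate and correct exclusion of $A=1$) is a serviceable substitute for the paper's combination of Lemma \ref{lem.2.1} and Lemma \ref{lem.2.4}. The problem is the endgame. After fixing $z$ you pass to the second factorization $A^y+B^y=(A+B)C_y$, extract $z$-th powers $w=r^z$, $C_y=s^z$ from $wC_y=\ell^z$, and land on a two-variable relation such as $4r^{2z}-6r^2s^2+3=s^z$. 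You can close this only for even $z$ (the discriminant-between-consecutive-squares argument for $z=4$ is fine), and you state explicitly that for odd $z$ --- $z=5$ when $y=3$, $z\in\{7,9\}$ when $y=5$ --- you have no argument, only the hope of combining the size window for $s$ with congruences. Those odd values are not a residual technicality: they are the entire content of the theorem (the even ones are already excluded by Lemma \ref{lem.2.4}), so the proposal does not prove the statement. Note also that your window $r^2<s<4^{1/z}r^2$ contains on the order of $(4^{1/z}-1)r^2$ integers, so it is not genuinely short, and the exceptional subcase $y\mid m$ is only gestured at.

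The missing idea is to stay with the one-variable equation rather than descend to $(r,s)$. For each fixed $z$ the equation reads $x^2=F(\ell)$ with $F(\ell)=\ell^{2z}-(2\ell^2-1)^y\in\ZZ[\ell]$ monic of even degree (for the surviving odd $z$ these are exactly \eqref{eq.2.19}, \eqref{eq.2.21} and \eqref{eq.2.22}), and Runge's device (Lemma \ref{lem.2.2}) applies: choose $G\in\ZZ[t]$ with $F=G^2+R$ and $\deg R<\deg G$; then for all $\ell$ beyond an explicit bound, $x^2=G(\ell)^2+R(\ell)$ is strictly trapped between $G(\ell)^2$ and $(G(\ell)+1)^2$ when $R(\ell)>0$ (or between $(G(\ell)-1)^2$ and $G(\ell)^2$ when $R(\ell)<0$), and the finitely many remaining $\ell$ are checked by hand. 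For $y=3$, $z=5$ the paper takes $G(t)=t^5-4t$, $R(t)=12t^4-22t^2+1$, and the cases $y=5$, $z\in\{7,9\}$ are identical in spirit. This is entirely elementary and is precisely what replaces the elliptic-curve machinery of \cite{MLS} that you were trying to avoid; your route converts a tractable one-variable trapping problem into an intractable two-variable one. Your secondary observations ($3\nmid 2m^2-1$ and $5\nmid 2m^2-1$, the congruence argument for $A=1$, the coprimality of the factors) are correct and in places neater than the paper's, but they do not touch the crux.
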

\begin{theorem}\label{theo.1.3}
If $k$ is not a square,	and $(x,y,z)$ is a solution of \eqref{eq.1.2} with $y\in\{3,5\}$, then $2\nmid z$ and
\begin{equation*}
y=Z_1t,\,\,t\in\mathbb{N},
\end{equation*}
\begin{equation*}
x+k^{(z-1)/2}\sqrt{k}=(X_1+\lambda Y_1\sqrt{k})^t(u+v\sqrt{k}),\,\,\lambda\in\{1,-1\},
\end{equation*}
where $X_1,Y_1,Z_1$ are positive integers such that
\begin{equation*}
X_1^2-kY_1^2=(-(2k-1))^{Z_1},\,\,\gcd(X_1,Y_1)=1,\,\,Z_1\mid h(4k)
\end{equation*}
and
\begin{equation*}
1<\left\lvert \dfrac{X_1+Y_1\sqrt{k}}{X_1-Y_1\sqrt{k}}\right\rvert<u_1+v_1\sqrt{k},
\end{equation*}
where $h(4k)$ is the class number of binary quadratic primitive forms with discriminant $4k$, $(u,v)$ is a solution of Pell's equation
\begin{equation}\label{eq.1.3}
u^2-kv^2=1,\,\,u,v\in\mathbb{Z},
\end{equation}
and $(u_1,v_1)$ is the least solution of \eqref{eq.1.3}.
\end{theorem}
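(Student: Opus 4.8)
The plan is to prove the two assertions separately: first that $z$ must be odd, by an elementary factorization in $\ZZ$; and then the multiplicative shape of the solution, by working in the real quadratic order $\mathcal{O}=\ZZ[\sqrt{k}]$, whose form class number is $h(4k)$. Throughout I may assume $2k-1$ has no divisor $\equiv\pm 3\pmod 8$, since otherwise Theorem \ref{theo.1.1} already asserts that no solution exists and there is nothing to prove.

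\textbf{Ruling out even $z$.} Suppose $z=2w$. Then $(k^w-x)(k^w+x)=(2k-1)^y$; any common prime divisor of the two factors would divide both $2k^w$ and $2k-1$, hence divide $\gcd(k,2k-1)=1$, so the factors are coprime. As their product is a $y$-th power and both are positive ($x<k^w$), each is a $y$-th power: $k^w-x=a^y$, $k^w+x=b^y$ with $ab=2k-1$, $\gcd(a,b)=1$, $a\le b$. Adding gives $a^y+b^y=2k^w$. From $b^y\le a^y+b^y\le 2b^y$ one gets $k^w\le b^y<2k^w$, and with $b\mid 2k-1$ (so $b<2k$) together with the lower bound $a^y+b^y\ge 2(ab)^{y/2}=2(2k-1)^{y/2}$, the exponent $w$ is pinned to a short range (for $y=3$, $w\in\{2,3\}$; for $y=5$, a similarly small set). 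Each case is then eliminated by combining $a^y+b^y=2k^w$ with $ab=2k-1$ and treating the result as a constraint on $k$; for instance the extreme factorization $a=1,b=2k-1$ reduces to $1+(2k-1)^y=2k^w$, which for $y=3,\,w=3$ collapses to $(k-1)^2=0$, contradicting $k>1$. The remaining coprime factorizations are disposed of by the same elementary means. The laboriousness of this finite-but-nontrivial case elimination (especially for $y=5$) is one of the technical costs of the argument.

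\textbf{Passing to a norm equation.} Now let $z=2s+1$ with $s=(z-1)/2$ and set $\alpha=x+k^{s}\sqrt{k}$, so that
\[
N(\alpha)=x^2-k^{2s+1}=x^2-k^{z}=-(2k-1)^y .
\]
Any prime dividing $\gcd(x,k)$ would divide $(2k-1)^y$ and hence $\gcd(k,2k-1)=1$, so $\gcd(x,k^{s})=1$ and the representation of $-(2k-1)^y$ by $U^2-kV^2$ is primitive. Under the standing assumption, every prime $p\mid 2k-1$ satisfies $(2/p)=1$ (as $k\equiv 2^{-1}\pmod p$) and is coprime to $k$, hence splits in $\mathcal{O}$. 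Primitivity makes the ideals $(\alpha)$ and $\overline{(\alpha)}$ coprime with product $(2k-1)^y\mathcal{O}$, so for each split $p$ its entire contribution $p^{\,y\,v_p(2k-1)}$ lies on one side; consequently $(\alpha)=\mathfrak{c}^{\,y}$ for an ideal $\mathfrak{c}$ with $N(\mathfrak{c})=2k-1$. Letting $Z_1=\ord[\mathfrak{c}]$ in the class group of $\mathcal{O}$, the principality of $(\alpha)=\mathfrak{c}^{\,y}$ gives $Z_1\mid y$, and Lagrange's theorem gives $Z_1\mid h(4k)$. Write $y=Z_1t$.

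\textbf{Extracting the generator and normalizing.} Choose a generator $\beta=X_1+\lambda Y_1\sqrt{k}$ of the principal ideal $\mathfrak{c}^{Z_1}$, with $\gcd(X_1,Y_1)=1$ and $N(\beta)=\pm(2k-1)^{Z_1}$. Since $y\in\{3,5\}$ is odd and $Z_1\mid y$, the integer $Z_1$ is odd; tracking signs through $N(\alpha)<0$ and $N(\text{unit})=\pm 1$ then forces $N(\beta)=-(2k-1)^{Z_1}=(-(2k-1))^{Z_1}$, exactly the stated relation. From $(\alpha)=(\beta^{t})$ we get $\alpha=\beta^{t}\varepsilon$ for a unit $\varepsilon$; absorbing the sign and the power of the fundamental unit into a Pell solution $\varepsilon=u+v\sqrt{k}$ yields the displayed factorization, with the conjugation ambiguity encoded by $\lambda\in\{1,-1\}$. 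Finally, the residual freedom $\beta\mapsto \pm\bar\beta^{\pm1}(u_1+v_1\sqrt{k})^{m}$ lets me normalize to $1<\lvert (X_1+Y_1\sqrt{k})/(X_1-Y_1\sqrt{k})\rvert<u_1+v_1\sqrt{k}$; the ratio is never $1$ because $X_1\ne 0$ (else $k\mid(2k-1)^{Z_1}$) and $Y_1\ne 0$ (else the norm would be nonnegative), so this pins down the canonical fundamental solution in its class. I expect the principal obstacle to lie here and in the previous paragraph: establishing $(\alpha)=\mathfrak{c}^{\,y}$, deducing $Z_1\mid h(4k)$, and in particular securing the precise sign $(-(2k-1))^{Z_1}$ and the unique fundamental normalization amount to re‑deriving the theory of classes of solutions of the Pell‑type equation $U^2-kV^2=N$, and the parity input $Z_1\mid y$ odd is what makes the sign come out correctly.
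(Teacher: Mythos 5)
Your overall architecture matches the theorem's two claims (oddness of $z$, then the multiplicative structure via a norm-form equation), but the first half has a genuine gap. After the coprime factorization $k^w-x=a^y$, $k^w+x=b^y$, $ab=2k-1$ and the bounds pinning $w$ to a short range (which agree with the paper's Lemma \ref{lem.2.4}, where $z\in\{4,6\}$ for $y=3$ and $z\in\{6,8,10\}$ for $y=5$), you propose to finish by ``combining $a^y+b^y=2k^w$ with $ab=2k-1$'' over the coprime factorizations of $2k-1$. This is not a finite case check: $k$ is a free parameter, the number of coprime factorizations of $2k-1$ grows with $\omega(2k-1)$, and for any factorization other than $a=1$ the identity $a^y+\bigl((2k-1)/a\bigr)^y=2k^w$ is a two-variable Diophantine equation in $(a,k)$, not ``a constraint on $k$'' that collapses to a polynomial identity the way your worked example $1+(2k-1)^3=2k^3$ does. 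So the only sub-case you actually dispose of is the easiest one, and the asserted ``same elementary means'' for the rest is exactly the missing content. The paper closes this differently and completely: for each admissible even $z$ it returns to $x^2=k^z-(2k-1)^y$, regards the right-hand side as a polynomial $F(k)$, and shows via Lemmas \ref{lem.2.2} and \ref{lem.2.3} that $F(k)$ is trapped strictly between consecutive values of squares $(G(k))^2$ for all $k$ past an explicit bound, with the small $k$ checked directly. If you want to salvage your route, the cleanest fix is to adopt that square-sandwiching step in place of the factorization casework.

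The second half of your argument is essentially a re-derivation, inside the order $\mathbb{Z}[\sqrt{k}]$, of the paper's Lemma \ref{lem.2.5} (quoted from Le and Yang--Fu), which the paper applies as a black box to $X^2-kY^2=(-(2k-1))^Z$ with $(X,Y,Z)=(x,k^{(z-1)/2},y)$. Your sketch is broadly right, and the detour through Theorem \ref{theo.1.1} is unnecessary: the existence of the solution already gives $(k/p)=1$ for every $p\mid 2k-1$, since $x^2\equiv k^z\pmod p$ with $p\nmid x$ and $z$ odd. Do note, however, that $\mathbb{Z}[\sqrt{k}]$ is in general a non-maximal order ($k$ need not be squarefree), so the identification of its class group with the form class group of discriminant $4k$, the unique factorization of $((2k-1)^y)$ into prime ideals, and Lagrange's theorem all have to be run in the Picard group of invertible ideals; this works here only because $\gcd(2k-1,4k)=1$ keeps everything coprime to the conductor, a point your write-up should make explicit if it is not simply going to cite Lemma \ref{lem.2.5}.
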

Obviously, from Theorem \ref{theo.1.3}, we can derive several criteria for determining whether \eqref{eq.1.2} has solutions $(x,y,z)$ with $y\in\{3,5\}$ for specific values of $k$.

\section{Preliminaries}
\begin{lemma}\label{lem.2.1}
Let $F(t)=t+a/t$ be a function of the real variable $t$, where $a$ is a constant with $a>1$. Then  $F(t)$ is a strictly decreasing function for $1\le t <\sqrt{a}$.
\end{lemma}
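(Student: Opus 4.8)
The plan is to reduce the claim to a sign computation for the derivative of $F$. Since $F(t)=t+a/t$ is differentiable for $t>0$, I would first compute
\[
F'(t)=1-\frac{a}{t^2}=\frac{t^2-a}{t^2}.
\]
On the interval $1\le t<\sqrt{a}$ one has $t^2<a$, so the numerator $t^2-a$ is negative while the denominator $t^2$ is positive; hence $F'(t)<0$ throughout $(1,\sqrt{a})$. Because $F$ is continuous on $[1,\sqrt{a})$ and has strictly negative derivative on the open interval, it follows by the mean value theorem that $F$ is strictly decreasing there, which is exactly the assertion.

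In keeping with the elementary spirit of the paper, I would alternatively avoid calculus and argue directly from a difference. For any $t_1,t_2$ with $1\le t_1<t_2<\sqrt{a}$, a short factorization gives
\[
F(t_2)-F(t_1)=(t_2-t_1)\left(1-\frac{a}{t_1t_2}\right).
\]
The first factor is positive since $t_2>t_1$. For the second factor, the hypotheses $t_1<\sqrt{a}$ and $t_2<\sqrt{a}$ force $t_1t_2<a$, so that $a/(t_1t_2)>1$ and the bracketed quantity is negative. Therefore $F(t_2)-F(t_1)<0$, i.e. $F(t_2)<F(t_1)$, proving strict monotone decrease directly on the whole interval including the left endpoint.

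There is essentially no obstacle in either route; the lemma is a routine monotonicity fact. The only point deserving a moment of attention is where the upper bound $t<\sqrt{a}$ is actually used: in the derivative approach it is precisely what makes $t^2-a<0$, and in the difference approach it is precisely what yields $t_1t_2<a$ and hence the negative factor. This also explains the role of the standing assumption $a>1$, which guarantees $\sqrt{a}>1$ so that the interval $[1,\sqrt{a})$ is nonempty and the statement is nonvacuous.
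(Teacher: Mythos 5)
Your derivative computation is exactly the paper's proof: the authors likewise observe $F'(t)=1-a/t^2<0$ on $1\le t<\sqrt{a}$ and conclude immediately. Your additional calculus-free factorization $F(t_2)-F(t_1)=(t_2-t_1)\bigl(1-\tfrac{a}{t_1t_2}\bigr)$ is a correct, slightly more elementary variant, but the main route matches the paper.
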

\begin{proof}
Since $F'(t)=1-a/t^2<0$ for $1\le t<\sqrt{a}$, where $F'(t)$ is the derivative of $F(t)$, we obtain the lemma immediately.
\end{proof}
We use the notation $\mathbb{Z}[t]$ for the set of all the polynomials of indeterminate $t$ with integer coefficients. It is a well known fact that if $F(t)\in\mathbb{Z}[t]$ which leading coefficient is positive, then there exist positive integers $m$ which can make
\begin{equation}\label{eq.2.1}
F(t)\in\mathbb{N},\,\,t\in\mathbb{N},\,\,t\ge m.
\end{equation}
Therefore, we may use the notation $m(F(t))$ to represent the least value of positive integers $m$ with \eqref{eq.2.1}.
\begin{lemma}\label{lem.2.2}
Let $F(t)=t^{2n}-a_{2n-1}t^{2n-1}-\cdots -a_0\in\mathbb{Z}[t]$, where $n$ is a positive integer. If there exist $G(t),R(t)\in\mathbb{Z}[t]$ such that
\begin{equation}\label{eq.2.2}
F(t)=(G(t))^2+R(t),
\end{equation}
where
\begin{equation}\label{eq.2.3}
\begin{aligned}
&G(t)=t^n-b_{n-1}t^{n-1}-\cdots -b_0,\,\,R(t)=r_{\ell}t^{\ell}-r_{\ell-1}t^{\ell-1}\\
&-\cdots -r_0,\,\,
r_{\ell}\neq 0,\,\,\ell<n,
\end{aligned}
\end{equation}
then the equation
\begin{equation}\label{eq.2.4}
X^2=F(Y),\,\,X,Y\in\mathbb{N}
\end{equation}
has no solutions $(X,Y)$ with $Y\ge Y_0$, where
\begin{equation}\label{eq.2.5}
Y_0=
	\begin{cases}
		\max\{m(G(t)),\,\,m(R(t)),\,\,m(2G(t)-R(t))\}, \qquad \quad \, \textrm{if $r_{\ell}>0$},\\
		\max\{m(G(t)),\,\,m(-R(t)),\,\,m(2G(t)+R(t)-1)\}, \quad  \textrm{if $r_{\ell}<0$}.
	\end{cases}
\end{equation} 
\end{lemma}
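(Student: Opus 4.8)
The plan is to run a standard ``trapped between consecutive squares'' argument. Suppose, for contradiction, that $(X,Y)$ is a solution of \eqref{eq.2.4} with $Y\ge Y_0$. Substituting the decomposition \eqref{eq.2.2} gives $X^2=(G(Y))^2+R(Y)$, so that $X^2$ differs from the perfect square $(G(Y))^2$ by exactly $R(Y)$. The whole proof then reduces to showing that, once $Y$ is large enough, $\lvert R(Y)\rvert$ is too small to carry $X^2$ up to the next square above $(G(Y))^2$ (when $r_\ell>0$) or down to the next square below it (when $r_\ell<0$).

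First I would record the sign information furnished by the thresholds $m(\cdot)$. Since $G$ has leading coefficient $1>0$, the hypothesis $Y\ge m(G(t))$ forces $G(Y)\ge 1$, so that $(G(Y)-1)^2$ and $(G(Y)+1)^2$ are the squares flanking $(G(Y))^2$. In the case $r_\ell>0$, the polynomial $R(t)$ has positive leading coefficient, so $Y\ge m(R(t))$ yields $R(Y)\ge 1$; moreover, because $\deg G=n>\ell=\deg R$, the polynomial $2G(t)-R(t)$ has leading term $2t^n$ and hence positive leading coefficient, so $Y\ge m(2G(t)-R(t))$ gives $2G(Y)-R(Y)\ge 1$, i.e. $R(Y)\le 2G(Y)-1$. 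Analogously, in the case $r_\ell<0$ the polynomials $-R(t)$ and $2G(t)+R(t)-1$ both have positive leading coefficient (the latter because the degree-$n$ term $2t^n$ dominates the lower-degree $R$), so the corresponding thresholds give $R(Y)\le -1$ and $2G(Y)+R(Y)-1\ge 1$, i.e. $R(Y)\ge -2G(Y)+2$. This is exactly where the hypothesis $\ell<n$ is essential: it is what guarantees that all three auxiliary polynomials in \eqref{eq.2.5} have positive leading coefficient, so that the $m(\cdot)$ are finite and the thresholds meaningful.

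With these inequalities the contradiction is immediate. If $r_\ell>0$, then $R(Y)\ge 1$ forces $X^2>(G(Y))^2$, hence $X\ge G(Y)+1$ (as $X$ and $G(Y)$ are positive integers), whence $X^2\ge (G(Y))^2+2G(Y)+1$ and therefore $R(Y)=X^2-(G(Y))^2\ge 2G(Y)+1$, contradicting $R(Y)\le 2G(Y)-1$. If $r_\ell<0$, then $R(Y)\le -1$ forces $X^2<(G(Y))^2$, hence $X\le G(Y)-1$, whence $X^2\le (G(Y))^2-2G(Y)+1$ and therefore $R(Y)\le -2G(Y)+1$, contradicting $R(Y)\ge -2G(Y)+2$. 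In both cases no solution with $Y\ge Y_0$ can exist.

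I do not anticipate a genuine obstacle; the argument is elementary and the real content is already isolated in the definition of $Y_0$. The only points demanding care are bookkeeping ones: verifying that each of the three auxiliary polynomials really has positive leading coefficient so that its $m(\cdot)$ is well defined (the role of $\ell<n$), and being careful in passing from the strict real inequality $X^2>(G(Y))^2$ (respectively $X^2<(G(Y))^2$) to the integer consequence $X\ge G(Y)+1$ (respectively $X\le G(Y)-1$), which is legitimate precisely because $G(Y)\ge 1>0$. These are exactly the three conditions packaged into \eqref{eq.2.5}.
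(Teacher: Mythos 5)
Your proof is correct and follows essentially the same squeeze as the paper: both arguments reduce to the incompatible pairs $R(Y)\ge 2G(Y)+1$ versus $2G(Y)-R(Y)>0$, and $2G(Y)+R(Y)-1\le 0$ versus $2G(Y)+R(Y)-1>0$. The only cosmetic difference is that you obtain these bounds directly from ``$X$ and $G(Y)$ are positive integers differing by at least $1$,'' whereas the paper routes the second case through the factorization $-R(Y)=AB$ and the monotonicity of $t+a/t$ (its Lemma~\ref{lem.2.1}); the inequalities reached are identical.
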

\begin{proof}
We now assume that $(X,Y)$ is a solution of \eqref{eq.2.4} with $Y\ge Y_0$. By \eqref{eq.2.2} and \eqref{eq.2.4}, we have
\begin{equation}\label{eq.2.6}
X^2=(G(Y))^2+R(Y).
\end{equation}

When $r_{\ell}>0$, by \eqref{eq.2.5}, we have $Y\ge \max\{m(G(t)),m(R(t))\}$. It implies that $G(Y)$ and $R(Y)$ are positive integers. Hence, by \eqref{eq.2.6}, we have
\begin{equation}\label{eq.2.7}
X+G(Y)=A,\,\,X-G(Y)=B,
\end{equation}
where
\begin{equation}\label{eq.2.8}
R(Y)=AB,\,\,A,B\in\mathbb{N},\,\,A>B.
\end{equation}
Further, by \eqref{eq.2.7} and \eqref{eq.2.8}, we get
\begin{equation*}
R(Y)=AB\ge A=X+G(Y)=2G(Y)+B>2G(Y),
\end{equation*}
whence we obtain
\begin{equation}\label{eq.2.9}
2G(Y)-R(Y)<0.
\end{equation}
Recall that $\ell<n$. We see from \eqref{eq.2.3} that $2G(t)-R(t)\in\mathbb{Z}[t]$ which has positive leading coefficient. In addition, by \eqref{eq.2.5}, we have $Y\ge m(2G(t)-R(t))$ and $2G(Y)-R(Y)>0$, which contradicts \eqref{eq.2.9}.

When $r_{\ell}<0$, by \eqref{eq.2.3}, we have $-R(t)\in\mathbb{Z}[t]$ which leading coefficient is positive. Hence, by \eqref{eq.2.4} and \eqref{eq.2.5}, $X$, $G(Y)$ and $-R(Y)$ are positive integers. By \eqref{eq.2.6}, we have
\begin{equation*} 
(G(Y))^2-X^2=-R(Y)		
\end{equation*}
and
\begin{equation}\label{eq.2.10}
G(Y)+X=A,\,\,G(Y)-X=B,
\end{equation}
where
\begin{equation}\label{eq.2.11}
-R(Y)=AB,\,\,A,B\in\mathbb{N},\,\,A>B.
\end{equation}
Eliminating $X$ from \eqref{eq.2.10}, by \eqref{eq.2.11}, we get
\begin{equation}\label{eq.2.12}
2G(Y)=A+B=\dfrac{-R(Y)}{B}+B.
\end{equation}
Take $a=-R(Y)$ and $t=B$. By Lemma \ref{lem.2.1}, we have
\begin{equation}\label{eq.2.13}
\dfrac{-R(Y)}{B}+B\le -R(Y)+1.
\end{equation}
By \eqref{eq.2.12} and \eqref{eq.2.13}, we get
\begin{equation}\label{eq.2.14}
2G(Y)+R(Y)-1\le 0.
\end{equation}
However, since $Y\ge m(2G(t)+R(t)-1)$ by \eqref{eq.2.5}, we have $2G(Y)+R(Y)-1>0$, which contradicts \eqref{eq.2.14}. To sum up, \eqref{eq.2.4} has no solutions $(X,Y)$ with $Y\ge Y_0$. The lemma is proved. 
\end{proof}
\begin{lemma}\label{lem.2.3}
Each of the following equations has no solutions $(X,Y)$.
\begin{equation}\label{eq.2.15}
	X^2=Y^4-8Y^3+12Y^2-6Y+1, \ \ X,Y \in \mathbb{N}.
\end{equation}
\begin{equation}\label{eq.2.16}
	X^2=Y^6-8Y^3+12Y^2-6Y+1, \ \ X,Y \in \mathbb{N}.
\end{equation}
\begin{equation}\label{eq.2.17}
	X^2=Y^6-32Y^5+80Y^4-80Y^3+40Y^2-10Y+1, \ \ X,Y \in \mathbb{N}.
\end{equation}
\begin{equation}\label{eq.2.18}
	X^2=Y^8-32Y^5+80Y^4-80Y^3+40Y^2-10Y+1, \ \ X,Y \in \mathbb{N}.
\end{equation}
\begin{equation}\label{eq.2.19}
	X^2=Y^{10}-8Y^6+12Y^4-6Y^2+1, \ \ X,Y \in \mathbb{N}.
\end{equation}
\begin{equation}\label{eq.2.20}
	X^2=Y^{10}-32Y^5+80Y^4-80Y^3+40Y^2-10Y+1, \ \ X,Y \in \mathbb{N}.
\end{equation}
\begin{equation}\label{eq.2.21}
	X^2=Y^{14}-32Y^{10}+80Y^8-80Y^6+40Y^4-10Y^2+1, \ \ X,Y \in \mathbb{N}.
\end{equation}
\begin{equation}\label{eq.2.22}
	X^2=Y^{18}-32Y^{10}+80Y^8-80Y^6+40Y^4-10Y^2+1, \ \ X,Y \in \mathbb{N}.
\end{equation}
\end{lemma}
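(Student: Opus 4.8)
The plan is to derive all of \eqref{eq.2.15}--\eqref{eq.2.22} as direct applications of Lemma~\ref{lem.2.2}. In each equation the right-hand side is a monic polynomial $F(Y)$ of even degree $2n$ with constant term $1$; concretely, each $F$ has the shape $Y^{2n}-(2Y-1)^j$ or $Y^{2n}-(2Y^2-1)^j$ with $j\in\{3,5\}$, which is why its subleading coefficients ($8,12,6$ and $32,80,80,40,10$) are all even. My first step, for a fixed equation, is to produce the decomposition $F(t)=(G(t))^2+R(t)$ demanded by \eqref{eq.2.2}--\eqref{eq.2.3}. I run the polynomial square-root algorithm: writing $G(t)=t^n-b_{n-1}t^{n-1}-\cdots-b_0$, I solve for $b_{n-1},\dots,b_0$ in turn so that $(G(t))^2$ matches $F(t)$ in every degree from $2n$ down to $n$. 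The relevant coefficients of $F$ are even, and a direct computation confirms that every $b_i$ then comes out as an integer; hence $G\in\mathbb{Z}[t]$, and $R(t)=F(t)-(G(t))^2\in\mathbb{Z}[t]$ has degree $\ell\le n-1<n$, exactly as \eqref{eq.2.3} requires. In each case one also checks $r_\ell\ne0$, i.e. that $F$ is not a perfect square.

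To illustrate, for \eqref{eq.2.16} this yields $G(t)=t^3-4$, $R(t)=12t^2-6t-15$ (so $\ell=2$, $r_\ell>0$); for \eqref{eq.2.18} it yields $G(t)=t^4-16t+40$, $R(t)=-80t^3-216t^2+1270t-1599$ (so $\ell=3$, $r_\ell<0$); and for \eqref{eq.2.19} it yields $G(t)=t^5-4t$, $R(t)=12t^4-22t^2+1$. With $G$ and $R$ in hand, the next step is to read off the sign of $r_\ell$, pick the matching branch of \eqref{eq.2.5}, and evaluate the three quantities it involves---namely $m(G)$, $m(R)$, $m(2G-R)$ when $r_\ell>0$, and $m(G)$, $m(-R)$, $m(2G+R-1)$ when $r_\ell<0$---by locating the largest real zero of each of the three polynomials. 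Setting $Y_0$ equal to their maximum, Lemma~\ref{lem.2.2} rules out every solution with $Y\ge Y_0$.

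It then remains to treat the finitely many $Y$ with $1\le Y<Y_0$. For each such $Y$ I would simply evaluate $F(Y)$ and test whether it is a perfect square; the only point to watch is that $Y=1$ occasionally gives $F(1)=0$, which does not count since \eqref{eq.2.4} requires $X\in\mathbb{N}$.

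I expect the main obstacle to be not the (forced) construction of $G$ but the size of this finite search. For the low-degree and purely-even equations, such as \eqref{eq.2.15}, \eqref{eq.2.16} and \eqref{eq.2.19}, the bounds are small ($Y_0=16,6,6$ respectively). By contrast, for the equations built from $(2Y-1)^5$, such as \eqref{eq.2.18} and \eqref{eq.2.20}, the polynomial $2G\mp R$ carries a large negative coefficient in its next-to-leading degree, which forces its largest root---and hence $Y_0$---up to around $40$. Consequently the bulk of the proof is the routine but lengthy check that $F(Y)$ is a non-square for each remaining $Y$, and this is where I would concentrate the effort.
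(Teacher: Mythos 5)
Your proposal follows essentially the same route as the paper: decompose each $F(t)$ as $(G(t))^2+R(t)$ via the polynomial square-root algorithm, apply Lemma~\ref{lem.2.2} with the bound $Y_0$ from \eqref{eq.2.5}, and finish with a direct square-test for the finitely many $Y<Y_0$; your sample pairs $(G,R)$ for \eqref{eq.2.16}, \eqref{eq.2.18}, \eqref{eq.2.19} coincide with the paper's. (If anything, your computed values of $m(2G-R)$ for the $r_\ell>0$ cases are the more careful ones, since e.g. $2G(t)-R(t)=2t^3-12t^2+6t+7$ for \eqref{eq.2.16} is negative at $t=2$, so the finite check must in fact extend to $Y=5$ as you indicate.)
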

\begin{proof}
First, we consider the solution of \eqref{eq.2.15}. We check by direct verification that \eqref{eq.2.15} has no solutions $(X,Y)$ with $Y<16$. On the other hand, let $F(t)=t^4-8t^3+12t^2-6t+1, G(t)=t^2-4t-2$ and $R(t)=-22t-3$. Then $F(t),G(t)$ and $R(t)$ satisfy \eqref{eq.2.2}. Since $m(G(t))=5, m(-R(t))=1$ and $m(2G(t)+R(t)-1)=16$, by Lemma \ref{lem.2.2}, \eqref{eq.2.15} has no solutions $(X,Y)$ with $Y\geq16$. Therefore, the lemma is true for \eqref{eq.2.15}.

Using the same method as in the above, we can solve other seven equations. The following is the information needed to solve the seven equations.
\begin{itemize}
	\item[(i)] $F(t)=t^6-8t^3+12t^2-6t+1,\ \ G(t)=t^3-4, \ \ R(t)=12t^2-6t-15,\ \ m(G(t))=2,\ \ m(R(t))=2,\ \ m(2G(t)-R(t))=1$.
	\\
	\item[(ii)] $F(t)=t^6-32t^5+80t^4-80t^3+40t^2-10t+1,\ \ G(t)=t^3-16t^2-88t-1448,\ \ R(t)=-54040t^2-254858t-2096703,\ \ m(G(t))=23,\ \ m(-R(t))=1,\ \ m(2G(t)+R(t)-1)=27041$.
	\\
	\item[(iii)] $F(t)=t^8-32t^5+80t^4-80t^3+40t^2-10t+1,\ \ G(t)=t^4-16t+40,\ \ R(t)=-80t^3-216t^2+1270t-1599,\ \ m(G(t))=1,\ \ m(-R(t))=1,\ \ m(2G(t)+R(t)-1)=43$.
	\\
	\item[(iv)] $F(t)=t^{10}-8t^6+12t^4-6t^2+1,\ \ G(t)=t^5-4t,\ \ R(t)=12t^4-22t^2+1,\ \ m(G(t))=2,\ \ m(R(t))=2,\ \ m(2G(t)-R(t))=1$.
	\\
	\item[(v)] $F(t)=t^{10}-32t^5+80t^4-80t^3+40t^2-10t+1,\ \ G(t)=t^5-16,\ \ R(t)=80t^4-80t^3+40t^2-10t-255,\ \ m(G(t))=2,\ \ m(R(t))=2,\ \ m(2G(t)-R(t))=1$.
	\\
	\item[(vi)] $F(t)=t^{14}-32t^{10}+80t^8-80t^6+40t^4-10t^2+1,\ \ G(t)=t^7-16t^3+40t,\ \ R(t)=-336t^6+1320t^4-1610t^2+1,\ \ m(G(t))=1,\ \ m(-R(t))=1,\ \ m(2G(t)+R(t)-1)=168$.
	\\
	\item[(vii)] $F(t)=t^{18}-32t^{10}+80t^8-80t^6+40t^4-10t^2+1,\ \ G(t)=t^9-16t,\ \ R(t)=80t^8-80t^6+40t^4-266t^2+1,\ \ m(G(t))=2,\ \ m(R(t))=2,\ \ m(2G(t)-R(t))=1$.
\end{itemize}
Thus, the lemma is proved.
\end{proof}

\begin{lemma}\label{lem.2.4}
If $(x,y,z)$ is a solution of \eqref{eq.1.2} with $y\in\{3,5\}$, then $2\nmid z$. 
\end{lemma}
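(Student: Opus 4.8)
The plan is to argue by contradiction: suppose $(x,y,z)$ solves \eqref{eq.1.2} with $y\in\{3,5\}$ and $2\mid z$, and write $z=2w$. Then \eqref{eq.1.2} reads $(2k-1)^y=(k^w-x)(k^w+x)$, and both factors are positive (their sum $2k^w$ and their product are positive). First I would show the two factors are coprime: any common divisor divides their sum $2k^w$ and also divides $(2k-1)^y$, which is odd and, since $\gcd(2k-1,k)=1$, coprime to $k$; hence the gcd is $1$. As $x\ge 1$ the factors are distinct, and two coprime positive integers whose product is $(2k-1)^y$ must each be a perfect $y$-th power. Thus $k^w-x=a^y$ and $k^w+x=b^y$ with $1\le a<b$, $\gcd(a,b)=1$ and $ab=2k-1$; adding gives
\[
a^y+b^y=2k^{z/2}.
\]

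I would then split on the value of $a$, which is odd since $a\mid 2k-1$. If $a\ge 3$, then $b=(2k-1)/a\le(2k-1)/3<k$, so $a^y+b^y\le 2b^y<2k^y$, whence $k^{z/2}<k^y$, that is $z<2y$. On the other hand $k^z=x^2+(2k-1)^y>(2k-1)^y>k^y$ forces $z>y$. The only even integers with $y<z<2y$ are $z=4$ for $y=3$ and $z\in\{6,8\}$ for $y=5$. For each of these pairs the equation $x^2+(2k-1)^y=k^z$, viewed with $k$ as the variable, is exactly one of \eqref{eq.2.15}, \eqref{eq.2.17}, \eqref{eq.2.18} (for example $k^4-(2k-1)^3=k^4-8k^3+12k^2-6k+1$), so Lemma \ref{lem.2.3} rules them all out.

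The residual case $a=1$, $b=2k-1$ is where I expect the main difficulty, since there the size estimate collapses: $b^y=(2k-1)^y$ may greatly exceed $k^y$, so $z$ is no longer bounded and the finite list of Lemma \ref{lem.2.3} cannot be invoked. Here $a^y+b^y=2k^{z/2}$ becomes $(2k-1)^y+1=2k^{z/2}$, and expanding $(2k-1)^y$ by the binomial theorem (using that $y$ is odd) gives $(2k-1)^y+1=2k\bigl(y-2\binom{y}{2}k+\cdots\bigr)$; comparing with $(2k-1)^y+1=2k^{z/2}$ yields $k^{z/2-1}=y-2\binom{y}{2}k+\cdots\equiv y\pmod{k}$. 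If $z\ge 4$ then $k\mid k^{z/2-1}$, so $k\mid y$, and as $y$ is prime this forces $k=y\in\{3,5\}$; a direct check of $(2k-1)^y+1=2k^{z/2}$ at $k=3$ and $k=5$ produces no solution. If $z=2$ then $(2k-1)^y=2k-1$ is impossible for $y\ge 3$, $k>1$. This exhausts all cases, yielding the contradiction, so $2\nmid z$.
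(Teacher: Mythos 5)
Your proof is correct, and its skeleton coincides with the paper's: since $2k-1$ is odd and coprime to $k$, the factorization $(k^{z/2}-x)(k^{z/2}+x)=(2k-1)^y$ splits into two coprime $y$-th powers $a^y<b^y$ with $ab=2k-1$, one bounds $z$, and the finitely many surviving cases are killed by Lemma~\ref{lem.2.3}. The difference lies in how the bound on $z$ is obtained and in which cases survive. The paper applies Lemma~\ref{lem.2.1} to $f^y+g^y=\bigl(\tfrac{2k-1}{g}\bigr)^y+g^y\le(2k-1)^y+1$ uniformly over all factorizations, which yields $z/2\le y$ and therefore forces it to also dispose of the boundary cases $z=2y$, i.e.\ equations \eqref{eq.2.16} and \eqref{eq.2.20}, via the polynomial machinery of Lemma~\ref{lem.2.2}. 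You instead split on the smaller factor $a$: when $a\ge 3$ you get the strictly sharper bound $z<2y$, leaving only \eqref{eq.2.15}, \eqref{eq.2.17}, \eqref{eq.2.18}; and the extremal case $a=1$, where the size argument degenerates, you resolve by the exact identity $(2k-1)^y+1=2k\bigl(y-2\binom{y}{2}k+\cdots\bigr)$ and the congruence $k^{z/2-1}\equiv y\pmod{k}$, which forces $k=y\in\{3,5\}$ and is then checked directly. Your route thus needs only three of the five relevant entries of Lemma~\ref{lem.2.3} and replaces two applications of Lemma~\ref{lem.2.2} with a short congruence; the paper's route is more uniform (one inequality covers every factorization) at the cost of enlarging the list of auxiliary hyperelliptic equations. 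Both arguments are complete; the only cosmetic remark is that your $z=2$ subcase is vacuous, since $k^z>(2k-1)^y>k^y$ already gives $z>y\ge 3$.
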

\begin{proof}
We now assume that $(x,z)$ is a solution of the equation
\begin{equation}\label{eq.2.23}
x^2+(2k-1)^3=k^z,\,\,x,z\in\mathbb{N}
\end{equation}
with $2\mid z$. Since $k>1$, we get $(2k-1)^3>k^2$. Then we have $z>3$. Since $2\nmid 2k-1$ and $\gcd(k,2k-1)=1$, by \eqref{eq.2.23}, we get
\begin{equation}\label{eq.2.24}
k^{z/2}+x=f^3,\,\,k^{z/2}-x=g^3,
\end{equation}
where 
\begin{equation}\label{eq.2.25}
2k-1=fg,\,\,f,g\in\mathbb{N},\,\,f>g,\,\,\gcd(f,g)=1,\,\,2\nmid fg.	
\end{equation}
Eliminating $x$ from \eqref{eq.2.24}, by \eqref{eq.2.25}, we have
\begin{equation}\label{eq.2.26}
2k^{z/2}=f^3+g^3=\big(\dfrac{2k-1}{g}\big)^3+g^3.
\end{equation}
take $a=(2k-1)^3$ and $t=g^3$. By Lemma \ref*{lem.2.1}, we have
\begin{equation}\label{eq.2.27}
\big(\dfrac{2k-1}{g}\big)^3+g^3\le (2k-1)^3+1.
\end{equation}
Hence, by \eqref{eq.2.26} and \eqref{eq.2.27}, we get
\begin{equation}\label{eq.2.28}
2k^{z/2}\le (2k-1)^3+1<8k^3.
\end{equation}
Further, since $k>1$, $z>3$ and $2\mid z$, by \eqref{eq.2.28}, we obtain $2\le z/2\le 3$ and $z\in\{4,6\}$.

When $z=4$, by \eqref{eq.2.23}, we have
\begin{equation}\label{eq.2.29}
x^2=k^4-8k^3+12k^2-6k+1.
\end{equation}
We see from \eqref{eq.2.29} that \eqref{eq.2.15} has a solution $(X,Y)=(x,k)$. But, by Lemma \ref*{lem.2.3}, it is impossible.

Similarly, when $z=6$, we find from \eqref{eq.2.23} that \eqref{eq.2.16} has a solution $(X,Y)=(x,k)$. However, by Lemma \ref*{lem.2.3}, it is also impossible. Therefore, \eqref{eq.1.2} has no solutions $(x,y,z)$ with $y=3$ and $2\mid z$.

If $(x,y,z)$ is a solution of \eqref{eq.1.2} with $y=5$ and $2\mid z$, then we have
\begin{equation}\label{eq.2.30}
x^2+(2k-1)^5=k^z,\,\,x,z\in\mathbb{N}
\end{equation}
and
\begin{equation}\label{eq.2.31}
k^{z/2}+x=f^5,\,\,k^{z/2}-x=g^5,
\end{equation} 
where $f$ and $g$ satisfy \eqref{eq.2.25}. Further, by \eqref{eq.2.25} and \eqref{eq.2.31}, we have
\begin{equation*}
2k^{z/2}=f^5+g^5=\big(\dfrac{2k-1}{g}\big)^5+g^5\le (2k-1)^5+1<32k^5,
\end{equation*}
whence we get $3\le z/2\le 5$ and $z\in\{6,8,10\}$. Hence, by \eqref{eq.2.30}, the equations \eqref{eq.2.17}, \eqref{eq.2.18} or \eqref{eq.2.20} has a solution $(X,Y)=(x,k)$ according as $z=6,8$ or 10. But by Lemma \ref*{lem.2.3}, it is impossible. Thus, the lemma is proved.
\end{proof}

Let $D$ be a fixed nonsquare positive integer, and let $h(4D)$ denote the class number of binary quadratic primitive forms with discriminant $4D$. Further let $K$ be a fixed odd integer with $|K|>1$ and $\gcd(D,K)=1$. It is well known that Pell's equation
\begin{equation}\label{eq.2.32}
	U^2-DV^2=1,\,\,U,V\in\mathbb{Z}
\end{equation}  
has positive integer solutions $(U,V)$, and it has a unique positive integer solution $(U_1,V_1)$ such that $U_1+V_1\sqrt{D}\le U+V\sqrt{D}$, where $(U,V)$ through all positive integer solutions of \eqref{eq.2.32}. The solution $(U_1,V_1)$ is called the least solution of \eqref{eq.2.32}. For any positive integer $n$, let
\begin{equation*}
	U_n+V_n\sqrt{D}=(U_1+V_1\sqrt{D})^n.
\end{equation*}
Then $(U,V)=(U_n,V_n)$ $(n=1,2,\cdots)$ are all positive integer solutions of \eqref{eq.2.32}. It follows that every solution $(U,V)$ of \eqref{eq.2.32} can be expressed as
\begin{equation}\label{eq.2.33}
	U+V\sqrt{D}=\lambda_1(U_1+\lambda_2V_1\sqrt{D})^m,\,\,\lambda_1,\lambda_2\in\{1,-1\},\,\,m\in\mathbb{Z},\,\,m\ge 0.
\end{equation}
Hence, by \eqref{eq.2.33}, every solution $(U,V)$ of \eqref{eq.2.32} satisfies
\begin{equation}\label{eq.2.34}
V \equiv 0 \pmod{V_1}.
\end{equation}
\begin{lemma}(\cite{Le},\cite{YanFu})\label{lem.2.5}
If the equation
\begin{equation}\label{eq.2.35}
X^2-DY^2=K^Z,\,\,X,Y,Z\in\mathbb{Z},\,\,\gcd(X,Y)=1,\,\,Z>0
\end{equation}
has solutions $(X,Y,Z)$, then every solution $(X,Y,Z)$ of \eqref{eq.2.35} can be expressed as
\begin{equation*}
	Z=Z_1t,\,\,t\in\mathbb{N},
\end{equation*}
\begin{equation*}
	X+Y\sqrt{D}=(X_1+\lambda Y_1\sqrt{D})^t(U+V\sqrt{D}),\,\,\lambda\in\{1,-1\},
\end{equation*}
where $(U,V)$ is a solution of \eqref{eq.2.32} and $X_1,Y_1,Z_1$ are positive integers satisfy
\begin{equation}\label{eq.2.36}
X_1^2-DY_1^2=K^{Z_1},\,\,\gcd(X_1,Y_1)=1,\,\,Z_1\mid h(4D)
\end{equation}
and
\begin{equation}\label{eq.2.37}
1<\left\lvert \dfrac{X_1+Y_1\sqrt{D}}{X_1-Y_1\sqrt{D}}\right\rvert<U_1+V_1\sqrt{D},
\end{equation}
and $(U_1,V_1)$ is the least solution of \eqref{eq.2.32}.
\end{lemma}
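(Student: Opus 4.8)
The plan is to recast the problem inside the quadratic order $\mathcal{O}=\mathbb{Z}[\sqrt{D}]$, whose discriminant is $4D$ and whose class group of primitive binary quadratic forms (equivalently, its oriented/narrow ideal class group) $\mathcal{C}$ has order $h(4D)$. Given a solution of \eqref{eq.2.35}, put $\alpha=X+Y\sqrt{D}$, so that $N(\alpha)=\alpha\bar{\alpha}=X^2-DY^2=K^Z$. First I would extract the arithmetic constraints forced by primitivity: if a prime $p\mid K$ divided $X$, then $p\mid DY^2$, and since $\gcd(D,K)=1$ this would give $p\mid Y$, contradicting $\gcd(X,Y)=1$; hence $\gcd(XY,K)=1$. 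As $K$ is odd and $p\nmid Y$, reducing $X^2\equiv DY^2\pmod p$ shows $D$ is a quadratic residue modulo $p$, so each $p\mid K$ splits in $\mathcal{O}$ as $\mathfrak{p}\bar{\mathfrak{p}}$ into invertible primes prime to the conductor; this last point is where $\gcd(K,2D)=1$ matters, since it guarantees that $h(4D)$ is the pertinent class number even when $\mathcal{O}$ is non-maximal.

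Next I would factor the principal ideal $(\alpha)$. Primitivity prevents $\mathfrak{p}$ and $\bar{\mathfrak{p}}$ from simultaneously dividing $(\alpha)$ (else $p\mid\alpha$, forcing $p\mid X$ and $p\mid Y$), so for each $p\mid K$ exactly one conjugate occurs, to the exponent $Z\,v_p(K)$. Thus $(\alpha)=\mathfrak{b}^{Z}$ for the fixed ideal $\mathfrak{b}=\prod_{p\mid K}\mathfrak{q}_p^{\,v_p(K)}$ (with $\mathfrak{q}_p\in\{\mathfrak{p},\bar{\mathfrak{p}}\}$ chosen by the given solution) of norm $|K|$. Letting $Z_1$ be the order of the class of $\mathfrak{b}$ in $\mathcal{C}$, Lagrange gives $Z_1\mid h(4D)$, while $\mathfrak{b}^{Z}=(\alpha)$ being principal forces $Z_1\mid Z$, i.e. $Z=Z_1t$ with $t\in\mathbb{N}$. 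Writing $\mathfrak{b}^{Z_1}=(\beta)$ with $\beta=X_1+Y_1\sqrt{D}$, one gets $N(\beta)=X_1^2-DY_1^2=K^{Z_1}$ once the sign is arranged as discussed below, and $(\alpha)=(\beta^{t})$ yields $\alpha=\beta^{t}\eta$ for a unit $\eta$. Computing $N(\eta)=K^{Z}/K^{Z_1t}=1$ shows $\eta$ has norm $1$, so $\eta=U+V\sqrt{D}$ is a solution of \eqref{eq.2.32}; the sign $\lambda\in\{1,-1\}$ records whether $\beta$ or $\bar{\beta}$ was used.

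It remains to force the normalization \eqref{eq.2.37}, which singles out the fundamental representative. Multiplying $\beta$ by the fundamental solution $\varepsilon=U_1+V_1\sqrt{D}$ of \eqref{eq.2.32} scales $\beta/\bar{\beta}$ by $\varepsilon^{2}$, while conjugation inverts it; by an elementary monotonicity argument in the spirit of Lemma~\ref{lem.2.1}, I would multiply $\beta$ by a suitable power of $\varepsilon$ and possibly replace it by its conjugate so that $1<\lvert(X_1+Y_1\sqrt{D})/(X_1-Y_1\sqrt{D})\rvert<U_1+V_1\sqrt{D}$, which determines $\beta$ (hence $X_1,Y_1,Z_1$) up to the conjugation already absorbed by $\lambda$. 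Reabsorbing the adjusting Pell unit into $\eta$ leaves the representation in the asserted shape.

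The step I expect to be the main obstacle is the bookkeeping of signs and units, not the ideal-theoretic skeleton. One must track that $\mathcal{O}$ may be non-maximal (so $4D$, not $D$, is the correct discriminant), distinguish the cases $N(\varepsilon_0)=+1$ and $N(\varepsilon_0)=-1$ for the fundamental unit $\varepsilon_0$ of $\mathcal{O}$ (only in the former does one genuinely descend to Pell solutions via $\varepsilon_0^{2}$), and, crucially, cope with $K$ possibly negative, so that the generator $\beta$ must be chosen with $N(\beta)$ matching the sign of $K^{Z_1}$. It is precisely here that using the class group of primitive forms of discriminant $4D$ (the narrow/oriented class group), rather than the ordinary ideal class group, is essential: it is this group whose order is $h(4D)$ and whose structure guarantees a generator of $\mathfrak{b}^{Z_1}$ of the required norm. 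A final point to isolate is that when $K$ has several prime divisors the selection $\mathfrak{q}_p$ may vary between solutions, so \eqref{eq.2.36}--\eqref{eq.2.37} should be read as producing a finite list of admissible fundamental triples $(X_1,Y_1,Z_1)$, each solution arising from exactly one of them; this ambiguity disappears in the prime-power case relevant to \eqref{eq.1.2}.
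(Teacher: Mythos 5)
The paper does not prove Lemma~\ref{lem.2.5} at all: it is imported verbatim from \cite{Le} and \cite{YanFu}, so there is no internal argument to measure your proposal against. Taken on its own terms, your ideal-theoretic skeleton is the standard (and correct) route: $\gcd(XY,K)=1$ from primitivity, the odd primes dividing $K$ split in $\mathbb{Z}[\sqrt{D}]$ and are prime to the conductor (which divides $2D$), the factorization $(X+Y\sqrt{D})=\mathfrak{b}^{Z}$ with $N\mathfrak{b}=|K|$, Lagrange in the class group of discriminant $4D$, and normalization of the fundamental generator by Pell units to get \eqref{eq.2.37}.

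The one step that does not close as written is exactly the one you flag, and your proposed resolution is not quite right. You set $Z_1=\ord[\mathfrak{b}]$ in the narrow class group and deduce $Z_1\mid Z$ from ``$\mathfrak{b}^{Z}=(\alpha)$ is principal.'' But in the application $K=-(2k-1)<0$ and $Z=y$ is odd, so $N(\alpha)=K^{Z}<0$; if $\mathbb{Z}[\sqrt{D}]$ has no unit of norm $-1$, the ideal $(\alpha)$ is \emph{not} trivial in the narrow class group, so $\ord[\mathfrak{b}]\mid Z$ does not follow. Dually, narrow principality of $\mathfrak{b}^{Z_1}$ only hands you a generator of norm $+|K|^{Z_1}$, whereas \eqref{eq.2.36} demands $X_1^2-DY_1^2=K^{Z_1}$, which is negative for odd $Z_1$. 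The repair is to twist: let $\epsilon$ be the (order at most two) narrow class consisting of principal ideals with a negative-norm generator, and take $Z_1$ to be the order of $[\mathfrak{b}]\epsilon$ when $K<0$ (of $[\mathfrak{b}]$ when $K>0$); equivalently, show that the set $S=\{j\ge 1:\mathfrak{b}^{j}\ \text{has a generator of norm}\ K^{j}\}$ is closed under positive differences, hence equals $Z_1\mathbb{N}$, and that $h(4D)\in S$. With that adjustment both $Z_1\mid Z$ and $Z_1\mid h(4D)$ follow and the rest of your argument goes through; this sign bookkeeping is precisely what the cited proofs in \cite{Le} and \cite{YanFu} carry out.
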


\section{Proofs of Theorems}
\begin{proof}[Proof of Theorem \ref{theo.1.1}]
We now assume that $(x,y,z)$ is a solution of \eqref{eq.1.2} with $y\in\{3,5\}$. By Lemma \eqref{lem.2.4}, we have $2\nmid z$. Hence, for any divisor $d$ of $2k-1$, we get from \eqref{eq.1.2} that
\begin{equation}\label{eq.3.1}
1=\bigg(\dfrac{k^z}{d}\bigg)=\bigg(\dfrac{k}{d}\bigg)=\bigg(\dfrac{4k}{d}\bigg)=\bigg(\dfrac{2}{d}\bigg),
\end{equation}
where $(*/*)$ is the Jacobi symbol. However, if $d\equiv \pm 3 \pmod{8}$, then we have $(2/d)=-1$, which contradicts \eqref{eq.3.1}. Thus, the theorem is proved.
\end{proof}	

\bigskip

\begin{proof}[Proof of Theorem \ref{theo.1.2}]
Since $k$ is square, we have
\begin{equation}\label{eq.3.2}
k=\ell^2,\,\,\ell\in\mathbb{N}.
\end{equation}
Substituting \eqref{eq.3.2} into \eqref{eq.2.23} and \eqref{eq.2.30}, we get
\begin{equation}\label{eq.3.3}
x^2+(2\ell^2-1)^3=\ell^{2z},\,\,x,z\in\mathbb{N},\,\,z>3
\end{equation}
and
\begin{equation}\label{eq.3.4}
x^2+(2\ell^2-1)^5=\ell^{2z},\,\,x,z\in\mathbb{N},\,\,z>5,
\end{equation}
respectively. In addition, by Lemma \ref*{lem.2.4}, we have $2\nmid z$.

If $(x,z)$ is a solution of \eqref{eq.3.3}, then we have
\begin{equation}\label{eq.3.5}
\ell^z+x=f^3,\,\,\ell^z-x=g^3,	
\end{equation}
where
\begin{equation}\label{eq.3.6}
2\ell^2-1=fg,\,\,f,g\in\mathbb{N},\,\,f>g,\,\,\gcd(f,g)=1,\,\,2\nmid fg.		
\end{equation}
Eliminating $x$ from \eqref{eq.3.5}, by \eqref{eq.3.6} and Lemma \ref{lem.2.1}, we get
\begin{equation}\label{eq.3.7}
2\ell^z=f^3+g^3=\big(\dfrac{2\ell^2-1}{g}\big)^3+g^3\le (2\ell^2-1)^3+1<8\ell^6.
\end{equation}
Recall that $2\nmid z$. By \eqref{eq.3.7}, we have $3<z\le 6$ and so $z=5$. Hence, we see from \eqref{eq.3.3} that \eqref{eq.2.19} has a solution $(X,Y)=(x,\ell)$. But, by Lemma \ref*{lem.2.3}, it is impossible. Therefore, if $k$ is a square, then \eqref{eq.1.2} has no solutions $(x,y,z)$ with $y=3$.

Similarly, if $(x,z)$ is a solution of \eqref{eq.3.4}, then
\begin{equation}\label{eq.3.8}
\ell^z+x=f^5,\,\,\ell^z-x=g^5,
\end{equation} 
where $f$ and $g$ satisfy \eqref{eq.3.6}. Further, by \eqref{eq.3.6}, \eqref{eq.3.8} and Lemma \ref*{lem.2.1}, we have
\begin{equation*}
2\ell^z=f^5+g^5=\big(\dfrac{2\ell^2-1}{g}\big)^5+g^5\le (2\ell^2-1)^5+1<32\ell^{10},
\end{equation*}
whence we get $5<z\le 10$ and so $z\in\{7,9\}$. Hence, by \eqref{eq.3.4}, the equation \eqref{eq.2.21} or \eqref{eq.2.22} has a solution $(X,Y)=(x,\ell)
$ according as $z=7$ or 9. But, by Lemma \ref*{lem.2.3}, it is also impossible. Thus, the theorem is proved.
\end{proof}

\begin{proof}[Proof of Theorem \ref{theo.1.3} ]
Notice that $k$ is not a square, $2\nmid 2k-1$, $\gcd(k,2k-1)=1$ and $2\nmid z$ by Lemma \ref{lem.2.4}, then the equation
\begin{equation}\label{eq.3.9}
X^2-kY^2=(-(2k-1))^Z,\,\,X,Y,Z\in \mathbb{Z},\,\,\gcd(X,Y)=1,\,\,Z>0
\end{equation}
has a solution
\begin{equation}\label{eq.3.10}
(X,Y,Z)=(x,k^{(z-1)/2},y).
\end{equation}
Therefore, applying Lemma \ref{lem.2.5} to \eqref{eq.3.9} and \eqref{eq.3.10}, we can obtain the theorem immediately.
\end{proof}

\section{An application of Theorem \ref{theo.1.3}}
Now we illustrate Theorem \ref{theo.1.3} for determining whether \eqref{eq.1.2} has solutions with $y\in\{3,5\}$ for $k=736$. These two cases correspond to the Diophantine equations
\begin{equation}\label{eq.4.1}
	x^2+1471^3=736^{z},\,\,x,z\in\mathbb{N},	
\end{equation}
\begin{equation}\label{eq.4.2}
	x^2+1471^5=736^{z},\,\,x,z\in\mathbb{N},	
\end{equation}
respectively. Here we will only solve the equation \eqref{eq.4.1}. Equation \eqref{eq.4.2} can be treated similarly.

We now to prove that \eqref{eq.4.1} has no solutions $(x,z)$. To do this, we use Lemma \ref{lem.2.5}.

If $(x,z)$ is a solution of \eqref{eq.4.1}, then the equation
\begin{equation}\label{eq.4.3}
	X^2-736Y^2=(-1471)^Z,\,\,X,Y,Z\in\mathbb{Z},\,\,\gcd(X,Y)=1,\,\,Z>0
\end{equation} 
has a solution
\begin{equation}\label{eq.4.4}
	(X,Y,Z)=(x,736^{(z-1)/2},3).
\end{equation}
Let $(X_1,Y_1,Z_1)$ be a solution of \eqref{eq.4.3}. Applying Lemma \ref{lem.2.5} to \eqref{eq.4.4}, we have
\begin{equation}\label{eq.4.5}
	3=Z_1t,\,\,t\in\mathbb{N}.
\end{equation}
On the other hand, since $h(4\times 736)=4$, by \eqref{eq.2.36} we have $4\equiv 0\pmod{Z_1}$. Hence, we see from \eqref{eq.4.5} that $Z_1=1$. So we have
\begin{equation}\label{eq.4.6}
	X_1^2-736Y_1^2=-1471,\,\,X_1,Y_1\in\mathbb{N},\,\,\gcd(X_1,Y_1)=1.
\end{equation}
Further, since the least solution of Pell's equation
\begin{equation*}
	U^2-736V^2=1,\,\,U,V\in\mathbb{Z}
\end{equation*}
is $(U_1,V_1)=(24335,897)$, by \eqref{eq.2.37}, we have
\begin{equation}\label{eq.4.7}
	1<\left\lvert \dfrac{X_1+Y_1\sqrt{736}}{X_1-Y_1\sqrt{736}}\right\rvert<24335+897\sqrt{736}.
\end{equation}
Hence, by \eqref{eq.4.6} and \eqref{eq.4.7}, we get
\begin{equation}\label{eq.4.8}
	X_1+Y_1\sqrt{736}<\sqrt{1471(24335+897\sqrt{736})}<8462.
\end{equation} 
Using MAPLE \cite{Celik}, by Lemma \ref{lem.2.5}, we see that the only solution $(X_1,Y_1,Z_1)$ of \eqref{eq.4.6} is $(2577,95,1)$. Then we have 
\begin{equation}\label{eq.4.9}
	x+736^{(z-1)/2}\sqrt{736}=(2577+95\lambda\sqrt{736})^3(U+V\sqrt{736}),\,\,\lambda\in\{1,-1\},
\end{equation}
where $(U,V)$ is a solution of Pell's equation
\begin{equation}\label{eq.4.10}
	U^2-736V^2=1,\,\,U,V\in\mathbb{Z}.
\end{equation}
Let
\begin{equation}\label{eq.4.11}
	f+g\lambda\sqrt{736}=(2577+95\lambda\sqrt{736})^3.
\end{equation}
Obviously, $f$ and $g$ are positive integers. Substitute \eqref{eq.4.11} into \eqref{eq.4.9}, we have
\begin{equation*}
	x+736^{(z-1)/2}\sqrt{736}=(f+g\lambda\sqrt{736})(U+V\sqrt{736}),
\end{equation*}
whence we get
\begin{equation}\label{eq.4.12}
	736^{(z-1)/2}=fV+\lambda gU.
\end{equation}
Since the least solution of \eqref{eq.4.10} is $(U_1,V_1)=(24335,897)$, by \eqref{eq.2.34}, we have $V\equiv 0 \pmod{897}$. So we obtain $23\mid V$. Hence, by \eqref{eq.4.12}, we get
\begin{equation}\label{eq.4.13}
	0\equiv \lambda gU \pmod{23}.
\end{equation}
Further, since $\lambda\in\{1,-1\}$ and $\gcd(U,23)=1$ by \eqref{eq.4.10}, we see from \eqref{eq.4.13} that
\begin{equation}\label{eq.4.14}
	g \equiv 0 \pmod{23}.
\end{equation}
However, by \eqref{eq.4.11}, we have $g=2523692765\equiv 9 \pmod{23}$. It implies that \eqref{eq.4.14} is false. Therefore, \eqref{eq.4.1} has no solutions $(x,z)$.

\section*{Acknowledgments} 
We would like to thank Professor Nikos Tzanakis for useful discussions and anonymous referee for carefully reading our paper and for his/her corrections. The first author is supported by the Scientific and Technological Research Council of Turkey (T\"UB\.ITAK) 2211/A National PhD scholarship program.

\end{document}